\newcommand{\mathsym}[1]{{}}
\newcommand{\unicode}[1]{{}}
\begin{document}

\newtheorem*{theorem}{Theorem}

\title{Repeatedly Appending Any Digit to Generate Composite Numbers}

\author{Jon Grantham, Witold Jarnicki, John Rickert, and Stan Wagon}

\date{}

\maketitle

\begin{abstract}
We investigate the problem of finding integers $k$ such that appending any number of copies of the base-ten digit
$d$ to $k$ yields a composite number. In particular, we prove that there exist infinitely many integers coprime to all digits such that repeatedly appending {\it any} digit yields a composite number.
\end{abstract}

\section{Introduction.}

Recently L. Jones \cite{jones} asked about integers that yield only composites when a sequence of the same base-ten digit is appended to the right. He showed that $37$ is the smallest number with this property when appending the digit $d=1$.
For each digit $d\in\{3, 7, 9\}$, he also found numbers coprime to $d$ that yield only composites upon appending $d$s. 


In this paper we find a single integer that works for all digits simultaneously. More precisely,

\begin{theorem}
There are infinitely many positive integers $k$ with
$\gcd(k, 2\cdot 3 \cdot 5 \cdot 7) = 1$, such that for any base-ten digit $d$, appending any number of $d$s to $k$ yields a
composite number.
\end{theorem}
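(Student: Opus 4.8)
The plan is to use covering systems, one for each relevant digit. I would first reduce to $d\in\{1,3,7,9\}$: for $d\in\{0,2,4,5,6,8\}$ the number obtained by appending any number of copies of $d$ to $k$ is either $k\cdot 10^n$ (when $d=0$) or ends in the digit $d$, hence is divisible by $2$ or by $5$ and strictly larger than that prime, so it is automatically composite with no condition on $k$ whatsoever. Thus only the four digits $1,3,7,9$ need to be dealt with.

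For such $d$, write the number obtained by appending $n$ copies of $d$ to $k$ as
\[
N_n=N_n(k,d)=k\cdot 10^n+dR_n,\qquad R_n=\tfrac{10^n-1}{9},
\]
so that, for instance, $d=9$ gives $N_n=(k+1)10^n-1$, a base-ten Riesel-type problem, and the other three digits give similar affine expressions in $10^n$. The key dictionary: for a prime $p\nmid30$ with $m=\mathrm{ord}_p(10)$ and any $n\equiv a\pmod m$ one has $10^n\equiv 10^a$ and $R_n\equiv R_a\pmod p$, so $N_n\equiv k\cdot 10^a+dR_a\pmod p$; hence $p\mid N_n$ for \emph{every} $n$ in the class $a\pmod m$ precisely when $k$ lies in the single residue class $k\equiv -dR_a10^{-a}\pmod p$. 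The prime $3$ is handled separately: $N_n\equiv k+dn\pmod 3$, so for $d\in\{1,7\}$ the class $n\equiv -k\pmod 3$ is killed by $3$, while for $d\in\{3,9\}$ the prime $3$ is useless and the covering must avoid it. The plan is then, for each $d\in\{1,3,7,9\}$, to exhibit a covering system of $\mathbb Z$ by classes $n\equiv a_{d,i}\pmod{m_{d,i}}$ together with primes $p_{d,i}$ of order dividing $m_{d,i}$ that cover them; then every $N_n$ is divisible by one of the finitely many $p_{d,i}$ and, being much larger than it for all but finitely many $k$ in the eventual progression, is composite.

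The device that makes this practical is that taking $a_{d,i}=0$ forces on $k$ only the condition $k\equiv 0\pmod{p_{d,i}}$, which does not involve $d$; equivalently, if $p\mid k$ then $p\mid N_n$ for \emph{every} digit whenever $\mathrm{ord}_p(10)\mid n$. So a few primes (for example $11$ of order $2$, $37$ of order $3$, $41$ of order $5$) cover, simultaneously for all four digits, every $n$ divisible by one of their orders, leaving for each digit only a bounded set of residue classes (something like $n\equiv\pm1\pmod 6$ if one uses only orders $2$ and $3$); those remaining classes are then covered per digit using further primes, whose orders one is free to take large. This freedom is what makes the construction possible at all, since the primitive prime divisors of $10^m-1$ are scarce for small $m$ (only $11$ has order $2$, only $37$ has order $3$, only $7$ and $13$ have order $6$, and so on) but abundant for large $m$; in choosing those primes and the residues $a_{d,i}$ one keeps all the induced congruences on $k$ mutually compatible.

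Finally I would assemble all the congruences on $k$ --- the shared ones $k\equiv 0\pmod p$ and the digit-specific ones $k\equiv c_{d,i}\pmod{p_{d,i}}$ --- arranging the construction so that no prime receives two incompatible conditions, adjoining $k\equiv 1\pmod 2$ and $k\equiv 1\pmod 5$, and ensuring that any congruence used modulo $3$ or $7$ places $k$ in a nonzero residue. By the Chinese Remainder Theorem the resulting system defines a nonempty arithmetic progression modulo the product $M$ of all the primes involved; since $k$ is divisible by the shared primes it is in particular composite (covering also the reading ``appending no copies of $d$''), and every sufficiently large member of the progression is coprime to $2\cdot 3\cdot 5\cdot 7$ and has the required property, so there are infinitely many such $k$. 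The main obstacle is the third step: actually producing the four covering systems while tracking which prime handles which class so that all the forced congruences on $k$ stay consistent and compatible with $\gcd(k,210)=1$ --- intricate bookkeeping that is most plausibly done with computer assistance.
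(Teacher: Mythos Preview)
Your proposal is correct and follows essentially the same approach as the paper: reduce to the digits $1,3,7,9$, use shared primes with $k\equiv 0$ (the paper takes $11$ of order $2$ and $37$ of order $3$) to handle most residue classes of $n$ simultaneously for all four digits, fill in the remaining classes per digit with further primes whose induced congruences on $k$ are kept mutually compatible, and conclude by CRT after adjoining $k\equiv 1\pmod{10}$. The paper carries out precisely the computer search you anticipate, producing explicit covers of lengths $6,6,30,8$ for the digits $1,3,7,9$ using the primes $3,7,11,13,31,37,41,73,101,137,211,241,271$ and arriving at the pandigital seed $k=4942768284976776320$.
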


Further, we investigate the question of the smallest numbers that remain composite upon appending strings of a digit for each particular digit. Jones found, for digits $3$, $7$, $9$, respectively, the examples $4070$, $606474$, and $1879711$. It appears that $4070$ is the smallest for $d=3$; for digit $7$ we found $891$, which is almost certainly minimal; and for digit $9$, the likely answer $10175$ was discovered by \cite{rodenkirch}. In the next section, we explain the obstructions to proving that these three answers are the smallest.

\section{Seeds.}

Given a digit $d$, let's use the term {\it seed} for a number coprime to $d$ such that appending any number of $d$s on the right yields a composite. The smallest positive integer with this property will be referred to as a {\it minimal seed}. Only the cases $d\in
\{1,3,7,9\}$ are nontrivial. Jones proved that $37$ is the minimal seed for $d=1$, and he also found the seed $4070$ for digit $3$. For every $k<4070$, except $817$, we have found a value of $n$ such that appending $n$ $3$s yields a prime or, in three cases, a probable prime. For $817$, appending up to $554789$ $3$s yielded only composites. But factorizations show no apparent obstruction to primality, so we conjecture that $4070$ is the minimal seed for digit $3$.

A key concept in this area is the notion of a covering set, a concept introduced by P. Erd\H{o}s \cite{erdos}. Such a set corresponds to a finite list of primes such that every member of a given sequence is divisible
by one of the primes. Here the sequences are the numbers, which we call $s_n$, obtained by
appending $n$ copies of a digit $d$ to an initial value $k$; typically the numbers are proved composite
by finding a covering set. For example, when $n$ $7$s are appended to $891$, the resulting number
is divisible by $11$, $37$, $11$, $3$, $11$, or $13$ according to the mod-$6$ residue of $n$ (starting at $0$).

To see this, observe that $s_n$ is given by the formula
$$s_n = k \cdot {10}^n + \frac{d(10^n - 1)}9.$$
Because $10^6 \equiv 1$ modulo each of the four primes, easy modular arithmetic shows that
$s_{6m+i} \equiv 0 \pmod{p}$ for the cases $p = 11$, $13$, and $37$, where $i$, depending on $p$, is $0$, $2$, $4$, $5$, or $1$.
The same is true for $i = 3$, the case where $p = 3$, because $10^{6m+3} - 1$ is divisible by $27$, thus
eliminating the denominator of $9$ in these cases. This proves that $891$ is a seed for digit $7$.

When a sequence of primes $(p_0, p_1,\dots , p_{r-1})$ divides the corresponding sequence of terms $s_n$ for a
digit $d$ and seed $k$, we say that the primes form a {\it prime cover} for $(k,d)$. For example, $(11,37,11,3,11,13)$ is a prime cover for $(891,7)$.

We have shown that $891$ is a minimal seed for digit $7$, under the assumption that appending $11330$ $7$s to $480$, and $28895$ $7$s to $851$ yields primes. Each of these two large numbers has passed $200$ strong pseudoprime tests. For all other potential seeds below $891$, we have found primes that can be certified using elliptic curve methods with \textit{Mathematica} or \textit{Primo} \cite{primo}. We used $\text{\textit{Primo}}$ on the largest cases; the largest was $9777\ldots 7$ with $2904$ $7$s, which took $45$ hours.

The digit-$9$ case asks for an integer $k$ such that $(k+1)10^n-1$ is always composite; it is thus a variation on the classic Riesel problem \cite{keller, riesel, noprime, primegrid}, which addresses the same question in base $2$. For that classic case, it is known that $509202$ is a seed, meaning that $509203\cdot 2^n-1$ is composite for $n\ge 0$. Participants in the Riesel project have also investigated the decimal case, and showed \cite{rodenkirch}  that the expected minimal seed for digit $9$ is $10175$. To see that this is a seed, we again consider the number of appended digits modulo $6$ and find a prime cover: in this case $(11, 7, 11, 37, 11, 13)$.  Of the numbers smaller than $10175$, only two, $4420$ and $7018$, have not been eliminated as seeds. The Riesel project \cite{noprime, primegrid} has checked each through the addition of $750000$ $9$s without finding a prime. In this case, primality proving for a probable prime is easy using the Lucas $n+1$ test \cite{cp}.

Coverings are not the only tool in these investigations, since sometimes factorizations
yield all the compositeness that is sought. Consider the situation with digit $1$ but working in
base $b=m^2$ with $m$ odd. The minimal seed in all such cases is $1$ because, for $n$ appended $1$s to
the seed $1$, with $n$ even, the factorization
$$111\dots11_b=\frac{b^{n+1}-1}{b-1}=\left(\frac{m^{n+1}-1}{m-1}\right)\left(\frac{m^{n+1}+1}{m+1}\right)$$
yields integer factors, and so the result is composite. When $n$ is odd, the total number of $1$s is
even, so compositeness is clear. Similar factorization methods show that the minimal seed for
digit $1$ in base $4$ is $5$, for digit $3$ in base $4$ is $8$, and for digit $8$ in base $9$ is $3$.

\section{A pandigital seed.}

It is not hard to find an integer that remains composite when any sequence of the form $ddd \ldots d$ is appended on the right, where 
$d$ is any decimal digit. We leave it as an exercise to show that $6930$ does the job; only the case $d=1$ requires a prime cover, and the one used in $\S 2$ for $891$ --- $(11, 37, 11, 3, 11, 13)$ --- works. Some prime searching shows that $6930$ is the smallest such example (the most difficult candidate to eliminate was $6069$; $1525$ $1$s yielded a prime).

A more natural problem in our context is to consider only the digits $1$, $3$, $7$, $9$ and ask for an integer $k$ that is a seed for each of these four digits (thus $k$ is coprime to 3 and 7). We call such a positive integer $k$ a {\it pandigital seed}.

For a prime $p$ coprime to $10$, we use the term {\it period} of $p$ to mean the smallest positive integer $r$ so that, for all $n$, $s_{n+r}\equiv s_n \pmod{p}$. The period of $3$ is $3$, while for other primes it is simply the order of $10$ modulo $p$.
If the period of a prime $p$ is small, then $p$ may divide a large proportion of the terms of the sequence $s_n$. In particular, if the period is $r$, then either every $r$th term of $\{s_n\}$ is divisible by $p$ or no terms of the sequence are divisible by $p$.

\begin{theorem}
A pandigital seed exists. An example is $4942768284976776320$.
\end{theorem}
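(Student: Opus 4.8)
The plan is to prove the Theorem by exhibiting, for $k=4942768284976776320$ and each of the four nontrivial digits $d\in\{1,3,7,9\}$, an explicit prime cover for $(k,d)$, and by checking directly that $\gcd(k,21)=1$. As noted in \S2, for the other six digits the numbers $s_n$ are automatically composite for $n\ge 1$ (they are divisible by $2$ or by $5$), so those digits impose no condition, and a pandigital seed is required only to be coprime to $3$ and $7$; hence the Theorem reduces to producing the four covers.

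Recall from \S2 that $s_n \bmod p$ is periodic in $n$ with period equal to the order of $10$ modulo $p$ when $p\neq 3$, and with period $3$ when $p=3$, in which case $s_n\equiv k+dn\pmod 3$. Fix a common multiple $L$ of the periods of the primes to be used. A prime cover of period $L$ then amounts to assigning to each residue $i\in\{0,1,\dots,L-1\}$ a prime $p_i$ for which the single congruence $k\cdot 10^i\equiv -\,d(10^i-1)/9\pmod{p_i}$ holds (or, when $p_i=3$, $k+di\equiv 0\pmod 3$); this guarantees $p_i\mid s_n$ for every $n\equiv i\pmod L$. I would therefore present the four covers in a table, one row per digit, and verify each entry by this one reduction of $k$ and $10^i$ modulo the small prime $p_i$. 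Two bookkeeping points: since $k$ has nineteen digits, every $s_n$ with $n\ge 1$ exceeds all the cover primes, so divisibility really does yield compositeness with no exceptional small cases; and because $s_n\equiv k\pmod 3$ for $d=3$, the prime $3$ can appear only in the covers for $d=1,7,9$, never for $d=3$. Finally $3\nmid k$ and $7\nmid k$ are settled by a digit-sum count and a short division.

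The real difficulty lies not in this verification but in finding such a $k$. One must first design four mutually compatible covers: each prime $p$ that occurs contributes, through the congruence above, a prescribed value of $k\bmod p$, and whenever a prime is reused — for a different digit, or for a different residue in the same digit's cover — these prescriptions must coincide. Arranging the covers so that this consistency holds while keeping every prime small (so that the modulus $P=\prod_p p$ stays manageable) is the combinatorial heart of the construction, and is carried out by computer search. Once compatible covers are in hand, the Chinese Remainder Theorem pins $k$ to a single residue class modulo $P$, and one searches that class for a representative coprime to $21$; the displayed number is such a representative — in fact the least one. For the paper it suffices to exhibit $k$ together with the four covers and run the checks described above, and I would add the remark that, by the same CRT argument, every member of the residue class of $k$ modulo $P$ that is coprime to $21$ is again a pandigital seed, which already yields infinitely many of them and points the way to the proof of the main Theorem.
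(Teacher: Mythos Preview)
Your approach is essentially identical to the paper's: exhibit explicit prime covers for each of the four digits, verify each entry by a single congruence reduction of $k$ and $10^i$ modulo a small prime, and remark that the compatible covers were found by computer search and assembled into a single residue class for $k$ via the Chinese Remainder Theorem; the paper carries this out with covers of periods $6$, $6$, $30$, $8$ for $d=1,3,7,9$ using the primes $3,7,11,13,31,37,41,73,101,137,211,241,271$, together with residue tables for $k$. One small slip in your bookkeeping remark: since $s_n\equiv k+dn\equiv k\pmod 3$ also when $d=9$, the prime $3$ is excluded from the covers for both $d=3$ and $d=9$, not only $d=3$ (and indeed the paper's $d=9$ cover avoids it).
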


\begin{proof}
A proof requires only checking that particular covers work, but we outline
the method by which the large seed and corresponding prime covers were
found. We find, for each digit, a prime cover so that the congruence
conditions on $k$ arising from the four covers do not contradict each other.
This method of coherent prime covers was used in \cite{bff,ffk,lm} to find
infinitely many values $k$ so that both $k2^n+1$ and $k2^n-1$ are composite
for all $n$, and solve related problems. To find such covers, we first need to
analyze the condition that a term in the sequence $\{s_n\}$ is divisible by
a given prime $p$.

If we assume that $p \notin \{2,3,5\}$, then $s_n\equiv  0\pmod{p}$ if and only if
$p$ divides  $$9k\cdot 10^n + d(10^n-1),$$
which is equivalent to 
\begin{equation}
k\equiv 9^{-1} d(10^{-n}-1)\pmod{p}.
\end{equation}
If $p=3$, then we instead have the condition 
$$s_n\equiv k+ d\frac{10^n-1}{9}\equiv 0\pmod{3},$$
which, because $(10^n-1)/9\equiv n \pmod{3}$, reduces to $k\equiv 2dn\pmod{3}$.
It is useful to observe that when $n$ is even then $10^n\equiv 1\pmod{11}$,
so that in this case $s_n$ is congruent modulo 11 to the seed itself. Therefore, the condition 
$k\equiv 0\pmod{11}$ makes $11$ a factor of $s_n$ whenever $n$ is even. 
Hence we may focus on forcing composites for odd values of $n$.

Since the period of $p=37$ is $3$, we consider this prime next. 
When the number of appended digits is $n=6i+3$, equation \thetag{1} gives
$$k\equiv 10^{-(6i+3)}-1=\left(10^{-6}\right)^i\cdot 10^{-3}-1\equiv 0\pmod{37}.$$
Application of \thetag{1} to other values of $n$ shows that $37$ divides $s_n$ for $n\equiv 0,1,2\pmod{3}$ provided $k\equiv 0, 11d, 10d\pmod{37}$, respectively. If $k\equiv 0\pmod{37}$, then $37$ may be used as a prime divisor no matter which digit is appended. Therefore we can assume $k\equiv 0\pmod{37}$, and so we have that $s_n$ is divisible by $11$ when $n\equiv 0$, $2$, or $4\pmod{6}$ and by $37$ when $n\equiv 0$ or $3\pmod{6}$. This leaves only the eight cases $n\equiv 1$ or $5\pmod{6}$ with digits $1$, $3$, $7$, and $9$ to be taken care of by other primes, as shown in Table \ref{tab:1137}.

\begin{center}
\begin{table}[h]
\centering
\begin{tabular}{|c||c|c|c|c|c|c|}
\hline
 & \multicolumn{6}{|c|}   {$n\pmod{6}$} \\
\hline & & & & & & \\[-1em] \hline
digit & $0$ & $1$ & $2$ & $3$ & $4$ & $5$ \\ \hline
$1$ & $11$ & $?$ & 11 & 37 & $11$ & $?$ \\ \hline
3 & 11 & ? & 11 & 37 & 11 & $?$ \\ \hline
7 & 11 & ? & 11 & 37 & 11 & $?$ \\ \hline
9 & 11 & ? & 11 & 37 & 11 & $?$  \\ \hline
\end{tabular}
\caption{Divisors of $s_n$ for digit $d$ using primes 11 and 37 with a seed that satisfies $k\equiv 0\pmod{11\cdot 37}$.}
\label{tab:1137}
\end{table}
\end{center}

To find divisors of $s_n$ for $n\equiv 1$ or $5\pmod 6$, we note that the primes $7$ and $13$ have period $6$.
Solving congruence \thetag{1} leads to the conditions listed in Table \ref{tab:713}.  These show that if $k\equiv 2\pmod{7}$, then two of the eight cases are divisible by $7$: the digit $1$ with $n\equiv 1\pmod{6}$ and digit $9$ with $n\equiv 5\pmod{6}$ cases. Similarly, any of $k\equiv 1$, $3$, or $9\pmod{13}$ provides divisibility for two of the cases. Each of these cases is then combined with a set of additional primes that contains $3$, $101$, $41$, $271$, $73$, and $137$, all of which have period $8$ or less. Finally, a computer search found a list of primes that handles all cases.

\begin{table}[h]
\begin{tabular}{|c||c|c|c|c|}
\hline
  & $n \equiv 1 \pmod{6}$ & $n \equiv 5 \pmod{6}$ &  $n \equiv 1 \pmod{6}$ & $n \equiv 5 \pmod{6}$ \\ \hline & & & & \\[-1em] \hline
digit 1 & $k \equiv 2 \pmod{7}$ & $k \equiv 1 \pmod{7}$ & $k \equiv 9 \pmod{13}$ & $k \equiv 1 \pmod{13}$ \\ \hline
digit 3 & $k \equiv 6 \pmod{7}$ & $k \equiv 3 \pmod{7}$ & $k \equiv 1 \pmod{13}$ & $k \equiv 3 \pmod{13}$ \\ \hline
digit 7 & $k \equiv 0 \pmod{7}$ & $k \equiv 0 \pmod{7}$ & $k \equiv 11 \pmod{13}$ & $k \equiv 7 \pmod{13}$ \\ \hline
digit 9 & $k \equiv 4 \pmod{7}$ & $k \equiv 2 \pmod{7}$ & $k \equiv 3 \pmod{13}$ & $k \equiv 9 \pmod{13}$ \\ \hline
\end{tabular}
\caption{Conditions on $k$ to guarantee that 7 or 13 divides the number obtained by appending a digit string to $k$.}
\label{tab:713}
\end{table}

The smallest value of $k$ found so far uses the primes 
$3$, $7$, $11$, $13$, $31$, $37$, $41$, $73$, $101$, $137$, $211$, $241$, and $271$. The cover-lengths for the four digit-cases are $6$, $6$, $30$, and $8$, respectively.
The prime covers for the four digits are as follows,

\begin{eqnarray*}
d=1 :&  (11,3,11,37,11,13),        \\
d=3 :&  (11,13,11,37,11,7), \\
d=7 :& (11,3,11,37,11,271,11,3,11,37,11,41,11,3,11,37,11,31,\\
 &\  11,3,11,37,11,211,11,3,11,37,11,241),\\
d=9 :&  (11,73,11,101,11,137,11,101).\\
\end{eqnarray*}

Tables 3 and 4 show the correspondence between the values of $n$ and $k$ for each digit. For example, when we are appending $n$ $7$s to $k$ where $n\equiv 11 \pmod{30}$, we see that $41$ divides $s_n$ whenever $k\equiv 28\pmod{41}$.

We apply the Chinese Remainder Theorem to all of the conditions on $k$ in Tables 3 and 4 to find the pandigital seed $k=4942768284976776320$.\end{proof}

Because $k$ is not divisible by $3$ or $7$, we can add $k\equiv 1 \pmod {10}$ to the conditions used in the proof, which then gives us $$1970728582053685108721 \pmod{19657858137687083324010}\text{,}$$ a value of $k$ that satisfies the theorem as stated in $\S 1$. This yields infinitely many such values.

 
\begin{table} 
\subfloat{\begin{tabular}{|l|l|}
  \multicolumn{2}{c}   {digit 1} \\ \hline
 \text{classes for }\text{\textit{$n$}} & classes for $k$\\
  \hline & \\[-1em] \hline
  $0 \pmod {2}$ & $0 \pmod {11}$ \\ \hline
  $1 \pmod {6}$ & $2 \pmod {3}$ \\ \hline 
  $3 \pmod {6}$ & $0 \pmod {37}$ \\ \hline
  $5 \pmod {6}$ & $1 \pmod {13}$ \\
\hline
\end{tabular}}%
\qquad\qquad%
\subfloat{\begin{tabular}{|l|l|}
  \multicolumn{2}{c}   {digit 3} \\ \hline
 \text{classes for }\text{\textit{$n$}} & classes for $k$\\
  \hline & \\[-1em] \hline
  \text{0 (mod 2)} & 0 (mod 11) \\ \hline
   \text{1 (mod 6)} & 1 (mod 13) \\ \hline 
    \text{3 (mod 6)} & 0 (mod 37) \\ \hline
 \text{5 (mod 6)} & 3 (mod 7) \\
 \hline
\end{tabular}}
\caption{Residue classes for the seed $k$ that guarantee the compositeness of $s_n$ when $1$ or $3$ is appended.}

\end{table}
 

\begin{table}[hbpt]
\subfloat{\begin{tabular}{|r@{}l|r@{}l|}
  \multicolumn{4}{c}   {digit 7} \\ \hline
 \multicolumn{2}{|c|}{classes for $n$} & \multicolumn{2}{|c|}{classes for $k$}\\
  \hline & \\[-1em] \hline
  $0$&$\pmod {2}$ & $0$&$\pmod {11}$ \\ \hline
  $1$&$\pmod {6}$ & $2$&$\pmod {3}$ \\ \hline 
  $3$&$\pmod {6}$ & $0$&$\pmod {37}$ \\ \hline
  $5$&$\pmod {30}$ & $0$&$\pmod {271}$ \\ \hline
  $11$&$\pmod {30}$ & $28$&$\pmod {41}$ \\ \hline
  $17$&$\pmod {30}$ & $20$&$\pmod {31}$ \\ \hline
  $23$&$\pmod {30}$ & $106$&$\pmod {211}$ \\ \hline
  $29$&$\pmod {30}$ & $7$&$\pmod {241}$ \\ \hline
\end{tabular}}%
\qquad\qquad%
\subfloat{\begin{tabular}{|r@{}l|r@{}l|}
  \multicolumn{4}{c}   {digit 9} \\ \hline
 \multicolumn{2}{|c|}{classes for $n$} & \multicolumn{2}{|c|}{classes for $k$}\\
  \hline & \\[-1em] \hline
  $0$&$\pmod {2}$ & $0$&$\pmod {11}$ \\ \hline
  $1$&$\pmod {8}$ & $21$&$\pmod {73}$ \\ \hline 
  $3$&$\pmod {4}$ & $9$&$\pmod {101}$ \\ \hline
  $5$&$\pmod {8}$ & $40$&$\pmod {137}$ \\ \hline
\end{tabular}}%

\caption{Residue classes for the seed $k$ that guarantee the compositeness of $s_n$ when $7$ or $9$ is appended.}
\end{table}

\section{Open problems.}

We conclude with some unsolved problems.

\begin{enumerate}
\item Find a number of $3$s which can be appended to $817$ to obtain a probable prime, thus completing the proof, modulo probable primes, that $4070$ is minimal for the digit $3$.
\item Find a number of $9$s which can be appended to $4420$ or $7018$ to produce a prime.
\item Certify primality of $480$ with $11330$ $7$s appended and $851$ with $28895$ $7$s. Doing so would complete the digit-$7$ case.
\item Data for all bases up to $10$ can be found at \cite{rickert}. Similar problems exist for these bases. 
\item Find a base-ten pandigital seed that is smaller than $4942768284976776320$.
\item Investigate for various bases the situation where the appended digits come from a fixed sequence, as was done by Jones and White \cite{joneswhite} for base ten.
\end{enumerate}

The authors thank the referee for many valuable comments that helped improve the exposition.

\bibliography{gjrw}
\bibliographystyle{monthly}

\noindent\textit{Institute for Defense Analyses, Center for Computing Sciences, 17100 Science Drive, Bowie, MD\\grantham@super.org}

\bigskip

\noindent\textit{Google R \& E Krak{\' o}w, Rynek Glowny 12, 31-042 Krak{\' o}w, Poland\\witoldjarnicki@google.com}

\bigskip

\noindent\textit{Rose-Hulman Institute of Technology, 5500 Wabash Avenue, Terre Haute, IN 47803\\rickert@rose-hulman.edu}

\bigskip

\noindent\textit{Mathematics Department, Macalester College, St. Paul, MN 55105\\wagon@macalester.edu}

\end{document}